\def\vbar{\mathchoice{\vrule height6.3ptdepth-.5ptwidth.8pt\kern- .8pt}
{\vrule height6.3ptdepth-.5ptwidth.8pt\kern-.8pt} {\vrule
height4.1ptdepth-.35ptwidth.6pt\kern-.6pt} {\vrule
height3.1ptdepth-.25ptwidth.5pt\kern-.5pt}}
\newtheorem{theorem}{Theorem}[section]
\newtheorem{lemma}[theorem]{Lemma}
\newtheorem{prop-def}[theorem]{Proposition-Definition}
\newtheorem{example}[theorem]{Example}
\begin{document}
\title{2-Local derivations on the W-algebra $W(2,2)$}
\author{\bf Xiaomin Tang}
\author{
Xiaomin Tang$^{1,2,}$
\footnote{Corresponding author,  E-mail: tangxm@hlju.edu.cn}
\\
{\small 1.   School of  Mathematical Science, Heilongjiang University, Harbin, 150080, P. R. China, } \\
{\small 2.  School of  Mathematical Science, Harbin Engineering University, Harbin, 150001, P. R. China}
}
\date{}
\maketitle

\begin{abstract}
The present paper is devoted to study 2-local derivations on W-algebra $W(2,2)$ which is an
infinite-dimensional Lie algebras with some out derivations. We prove that all 2-local derivations
on the W-algebra $W(2,2)$ are derivation. We also give a complete classification of the 2-local derivation
on  the so called thin Lie algebra and prove that it admits a lots of 2-local derivations which are not derivations.
\end{abstract}

\textbf{Key words}:  W-algebra  $W(2,2)$, thin Lie algebra, derivation, 2-local derivation.

\textbf{Mathematics Subject Classification}: 17A32, 17B30, 17B10.

\section{Introduction}

\ \ \ \ \ \ In 1997, \v{S}emrl \cite{Sem} introduced the notion of  2-local derivations
on algebras. Namely, for an associative algebra $\mathcal{L}$,  a map \(\Delta : \mathcal{L} \to
\mathcal{L}\) (not necessarily linear) is called a \textit{2-local
derivation} if, for every pair of elements \(x,y \in  \mathcal{L},\) there exists a
derivation \(\Delta_{x,y} : \mathcal{L} \to \mathcal{L}\) (depending on $x, y$) such that
\(\Delta_{x,y} (x) = \Delta(x)\) and \(\Delta_{x,y}(y) = \Delta(y).\)

The concept of 2-local derivation is actually an important and interesting property for an algebra.
For a given algebra \(\mathcal{L}\), the main problem concerning
these notions is to prove that they automatically become a
derivation or to give examples of 2-local derivations of \(\mathcal{L},\)
which are not derivations.
Recently, several papers have been devoted to similar notions and corresponding problems for  Lie algebras $\mathcal{L}$.
In \cite{AyuKudRak,ChenWang} the authors prove that every 2-local derivation on a semi-simple Lie algebra \(\mathcal{L}\) is a derivation and that
each finite-dimensional nilpotent Lie algebra, with dimension larger than two admits 2-local derivation which is not a
derivation.  In \cite{ayu2019} the authors study 2-local derivations on some infinite-dimensional Lie algebras, i.e., they that all 2-local derivations
on the Witt algebra as well as on the positive Witt algebra are (global) derivations, and give an example of infinite-dimensional Lie
algebra with a 2-local derivation which is not a derivation. In \cite{ayu2020,zhao2020} the authors prove that every 2-local derivation on some class of generalized Witt algebras
(or their Borel subalgebras) is a derivation.

 As we see that the Lie algebras whose every 2-local derivation is a derivation almost all have a common quality,
 that is any derivation of these Lie algebras is inner.  We naturally want to know what form of the 2-local derivation has if the Lie algebra has some out derivations?
  In the present paper we study 2-local derivations on the infinite-dimensional Lie algebra $W(2,2)$  and so called thin Lie algebra $\mathfrak{T}$.  Note that both $W(2,2)$ and $\mathfrak{T}$ all have some out derivations. We prove that every 2-local derivation on W-algebra $W(2,2)$ is a derivation and the tin Lie algebra $\mathfrak{T}$ admits many 2-local derivations which are not derivations.

In Section 2 we give some preliminaries concerning W-algebra $W(2,2)$. In Section 3
we prove that every 2-local derivations on W-algebra $W(2,2)$ are automatically
derivations. In Section 4 we complete describe the 2-local derivation on the so-called thin Lie algebra and show that it admits 2-local derivations which are not derivations.

Throughout this paper, we denote by $\mathbb{Z}$, $\mathbb{N}$, $\mathbb{Z}^*$ and $\mathbb{C}$ the sets of  all integers, positive integers, nonzero integers and complex numbers respectively. All algebras are over $\mathbb{C}$.

\medskip

\section{Preliminaries}

\medskip

\ \ \ \ \ \  In this section we give some necessary definitions and preliminary
results.

A derivation on a Lie algebra \(\mathcal{L}\) is a linear map
$D:\mathcal{L}\rightarrow \mathcal{L}$ which satisfies the Leibniz
law, that is,
$$
D([x,y])=[D(x),y]+[x, D(y)]
$$
for all $x,y\in \mathcal{L}.$ The set of all derivations of
\(\mathcal{L}\) with respect to the commutation operation is a Lie algebra and it
is denoted by $Der(\mathcal{L}).$ For all $a\in \mathcal{L}$,
the map ${\rm ad} (a)$ on \(\mathcal{L}\) defined as ${\rm ad} (a)x=[a,x],\ x\in\mathcal{L}$ is a derivation and derivations of this form are called
\textit{inner derivation}.

Recall that a map $\Delta: \mathcal{L}\rightarrow \mathcal{L}$
(not liner in general) is called a \textit{2-local derivation} if
for every $x,y\in \mathcal{L},$ there exists a derivation
$\Delta_{x,y}:\mathcal{L}\rightarrow \mathcal{L}$ (depending on $x,y$)
such that $\Delta(x)=\Delta_{x,y}(x)$ and $\Delta(x)=\Delta_{x,y}(y)$.
For a 2-local derivation on $\mathcal{L}$ and $k\in \mathbb{C}$, $x\in \mathcal{L}$, we have
\begin{equation}\label{recall1}
\Delta(kx)=\Delta_{x,kx}(kx)=k\Delta_{x,kx}(x)=k\Delta(x).
\end{equation}

The W-algebra $W(2,2)$ is an infinite-dimensional Lie algebra with the $\mathbb{C}$-basis $$\{L_m, I_m| m\in \mathbb{Z} \}$$ and
the Lie brackets are given by
\begin{eqnarray*}
&&[L_m,L_n]=(m-n)L_{m+n},\\
&&[L_m,I_n]=(m-n)I_{m+n}, \\
&&[I_m,I_n]=0, \ \forall m,n\in \mathbb{Z}.
\end{eqnarray*}
A class of central extensions of $W(2,2)$ first introduced by \cite{JiangDong} in their recent work on the classification of some simple vertex operator algebras, and then some scholars studied the theory on structures and representations of $W(2,2)$  or its central extensions, see \cite{Chenhj,GJP,Jiangzhangwei,Rad,tangw22,Wangy} and so forth. We now recall and establish  several auxiliary results.

\begin{lemma}\label{lemma_1} (see \cite{GJP})
Denote by ${\rm{Der}} (W(2, 2))$ and by ${\rm{Inn}} (W(2, 2))$ the space of derivations and the
space of inner derivations of $W(2, 2)$ respectively. Then
$${\rm{Der}} (W(2, 2))={\rm{Inn}} (W(2, 2))\oplus\mathbb{C}D,$$
where $D$ is an outer derivation defined by $D(L_m)=0$, $D(I_m)=I_m$ for all $m \in \mathbb{Z}$.
\end{lemma}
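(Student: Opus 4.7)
The plan is to use the standard $L_0$-eigenspace grading on $W(2,2)$, decomposing an arbitrary derivation into homogeneous components and then killing all but a multiple of $D$ by inner derivations.

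First I would verify that $D$ as defined is indeed a derivation (a one-line check on the three brackets, using that $[I_m,I_n]=0$ so the inconsistency coming from $D(I_m)+D(I_n)\neq 0$ does not arise) and that it is outer: any inner derivation $\mathrm{ad}(a)$ that kills every $L_m$ forces $a\in\mathbb{C}L_0\oplus\mathbb{C}I_0$, but no such $a$ can send each $I_m$ to $I_m$, since $[\alpha L_0+\beta I_0,I_m]=-\alpha m\,I_m$, which cannot equal $I_m$ for all $m$ simultaneously.

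Next, note that $W(2,2)=\bigoplus_{n\in\mathbb{Z}}W_n$ with $W_n=\mathbb{C}L_{-n}\oplus\mathbb{C}I_{-n}$ is the weight decomposition under $\mathrm{ad}(L_0)$. Given a derivation $d$, I would decompose $d=\sum_{k\in\mathbb{Z}}d_k$ with $d_k(W_n)\subseteq W_{n+k}$, as is standard for graded Lie algebras. For each fixed $k\neq 0$, write $d_k(L_0)=\alpha L_k+\beta I_k$ and $d_k(L_m)=a_m L_{m+k}+b_m I_{m+k}$, $d_k(I_m)=c_m L_{m+k}+e_m I_{m+k}$. Applying $d_k$ to $[L_0,L_m]=-mL_m$ and $[L_0,I_m]=-mI_m$ and comparing coefficients gives $a_m=\alpha(k-m)/k$, $b_m=\beta(k-m)/k$, $c_m=0$, and $e_m=\alpha(k-m)/k$. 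A direct computation shows that this is precisely $\mathrm{ad}\bigl(\tfrac{1}{k}(\alpha L_k+\beta I_k)\bigr)$, so $d_k$ is inner. (The relations $[L_m,I_n]=(m-n)I_{m+n}$ and $[L_m,L_n]=(m-n)L_{m+n}$ impose no new constraints—this can be checked, or simply deduced from the fact that the candidate inner derivation is already a derivation agreeing with $d_k$ on the generators.)

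For $k=0$, applying $d_0$ to $[L_m,L_n]=(m-n)L_{m+n}$ gives the additivity condition $a_{m+n}=a_m+a_n$ (and likewise for $b$) whenever $m\neq n$, whence $a_m=ma_1$ and $b_m=mb_1$ with $a_0=b_0=0$. Subtracting the inner derivation $\mathrm{ad}(-a_1L_0-b_1I_0)$ reduces to the case $d_0(L_m)=0$ for all $m$. Applying the reduced $d_0$ to $[L_m,I_n]=(m-n)I_{m+n}$ shows $c_m$ and $e_m$ are constant in $m$, say $c_m=c$ and $e_m=e$. Finally, applying to $[I_m,I_n]=0$ forces $2c(m-n)I_{m+n}=0$, hence $c=0$, leaving $d_0(I_m)=eI_m=eD(I_m)$. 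The main technical wrinkle is that when assembling $d=\mathrm{ad}(x)+\lambda D$ one must ensure $x=\sum_k x_k$ lies in $W(2,2)$, i.e., only finitely many $x_k$ are nonzero; this is handled by noting that $W(2,2)$ is finitely generated (for instance by $L_{\pm 2},L_{\pm 3},I_0$), and $d$ of each generator is a finite sum, so only finitely many components $d_k$ can be nonzero. Assembling these pieces yields $\mathrm{Der}(W(2,2))=\mathrm{Inn}(W(2,2))\oplus\mathbb{C}D$; the directness of the sum follows from the already-established fact that $D$ is outer.
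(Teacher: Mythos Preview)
The paper does not prove this lemma; it simply records the result and cites \cite{GJP}. Your argument supplies a correct self-contained proof via the standard $\mathbb{Z}$-grading technique: decompose an arbitrary derivation into homogeneous components $d_k$, show each $d_k$ with $k\neq 0$ is inner by recovering its values from $d_k(L_0)$ and matching with $\mathrm{ad}\bigl(\tfrac{1}{k}(\alpha L_k+\beta I_k)\bigr)$, reduce $d_0$ by an inner derivation to one annihilating all $L_m$, and then identify the remainder as a scalar multiple of $D$. The finite-generation step (using, e.g., $L_{\pm 2},L_{\pm 3},I_0$) is the correct way to guarantee that only finitely many $d_k$ are nonzero so that the inner part is genuinely $\mathrm{ad}$ of an element of $W(2,2)$.

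One cosmetic slip worth fixing: with your convention $W_n=\mathbb{C}L_{-n}\oplus\mathbb{C}I_{-n}$ (the $\mathrm{ad}(L_0)$-eigenspaces), the condition $d_k(W_n)\subseteq W_{n+k}$ would force $d_k(L_m)\in\mathbb{C}L_{m-k}\oplus\mathbb{C}I_{m-k}$, not $\mathbb{C}L_{m+k}\oplus\mathbb{C}I_{m+k}$ as you wrote. It is cleaner to use the natural algebra grading $W(2,2)_n=\mathbb{C}L_n\oplus\mathbb{C}I_n$ directly; with that convention your displayed formulas for $a_m,b_m,c_m,e_m$ are correct as stated.
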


\begin{lemma}\label{lemma_2}
Let $\Delta$ be a 2-local derivation on the W-algebra $W(2,2)$. Then for every $x,y\in W(2,2)$, there exists a derivation
$\Delta_{x,y}$ of $W(2,2)$ such that $\Delta(x)=\Delta_{x,y}(x)$, $\Delta(y)=\Delta_{x,y}(y)$ and it can be written as
\begin{equation}\label{tangtang6}
\Delta_{x,y}={\rm ad} (\sum_{k\in \mathbb{Z}}\left(a_k(x,y)L_k+b_k(x,y)I_k\right))+\lambda(x,y)D
\end{equation}
where $\lambda, a_k, b_k (k\in \mathbb{Z})$ are complex-valued functions on $W(2,2)\times W(2,2)$ and $D$ is given by Lemma \ref{lemma_1}.
\end{lemma}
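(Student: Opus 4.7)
The plan is essentially to read off the statement from the definition of 2-local derivation together with the structural description of $\mathrm{Der}(W(2,2))$ provided by Lemma \ref{lemma_1}. First, I would fix arbitrary $x, y \in W(2,2)$ and invoke the very definition of a 2-local derivation to obtain a derivation $\Delta_{x,y} \in \mathrm{Der}(W(2,2))$ satisfying $\Delta_{x,y}(x) = \Delta(x)$ and $\Delta_{x,y}(y) = \Delta(y)$. This takes care of the agreement conditions; the remaining content of the lemma is purely a matter of writing $\Delta_{x,y}$ in a convenient normal form.

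Next, I would apply Lemma \ref{lemma_1}, which asserts the direct-sum decomposition $\mathrm{Der}(W(2,2)) = \mathrm{Inn}(W(2,2)) \oplus \mathbb{C} D$. This means $\Delta_{x,y}$ decomposes uniquely as $\Delta_{x,y} = \mathrm{ad}(z) + \lambda D$ for some $z \in W(2,2)$ and some scalar $\lambda \in \mathbb{C}$. Expanding $z$ in the basis $\{L_m, I_m : m \in \mathbb{Z}\}$ gives
\begin{equation*}
z = \sum_{k \in \mathbb{Z}} \bigl(a_k L_k + b_k I_k\bigr),
\end{equation*}
where by definition of $W(2,2)$ as the span of this basis, all but finitely many of the $a_k, b_k$ vanish. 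Substituting yields exactly the form stated in \eqref{tangtang6}.

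Finally, to justify the statement that $a_k, b_k, \lambda$ are (complex-valued) functions on $W(2,2) \times W(2,2)$, I would simply note that for each pair $(x,y)$ the above procedure produces a specific set of scalars; choosing any one such tuple (say via the axiom of choice, or by fixing the unique decomposition coming from Lemma \ref{lemma_1}) defines maps $a_k, b_k, \lambda \colon W(2,2) \times W(2,2) \to \mathbb{C}$. No claim of continuity, additivity, or any other regularity of these functions is being made, which is what makes this step trivial.

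The main (and essentially only) subtle point to flag is the uniqueness of the decomposition: the scalar $\lambda(x,y)$ is genuinely well-defined by $\Delta_{x,y}$ because the sum in Lemma \ref{lemma_1} is direct, whereas the element $z$ is well-defined only modulo the center of $W(2,2)$, which for $W(2,2)$ is trivial, so in fact the coefficients $a_k(x,y), b_k(x,y)$ are also uniquely determined by $\Delta_{x,y}$ itself. Thus no genuine obstacle arises; the lemma is a direct packaging of the 2-local-derivation hypothesis with the derivation-classification result of Lemma \ref{lemma_1}.
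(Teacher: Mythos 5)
Your proposal is correct and follows exactly the paper's (one-line) argument: apply the definition of a 2-local derivation to get $\Delta_{x,y}$, then use the decomposition $\mathrm{Der}(W(2,2))=\mathrm{Inn}(W(2,2))\oplus\mathbb{C}D$ from Lemma \ref{lemma_1} and expand the inner part in the basis $\{L_k, I_k\}$. Your added remark on uniqueness (via directness of the sum and triviality of the center of $W(2,2)$) is accurate but not needed for the lemma as stated.
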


\begin{proof}
By Lemma \ref{lemma_1}, obviously the derivation $\Delta_{x,y}$ can be written as the form of  (\ref{tangtang6}).
\end{proof}

\section{2-Local derivations on  $W(2,2)$}

Now we shall give the main result concerning 2-local derivations on $W(2,2)$.

\begin{theorem}\label{thm-tang}
Every 2-local derivation
on  the W-algebra $W(2,2)$ is a derivation.
\end{theorem}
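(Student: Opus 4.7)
The plan is to reduce $\Delta$ by subtracting off finitely many derivations of the form $\operatorname{ad}(u)+\lambda D$ provided by Lemma~\ref{lemma_2}, until the resulting 2-local derivation vanishes identically; since each subtraction leaves a 2-local derivation, this proves that $\Delta$ itself is a derivation.

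First, by 2-locality at $(L_0,I_0)$, select a representing derivation $\operatorname{ad}(u_0)+\lambda_0 D$ and replace $\Delta$ by $\Delta-\operatorname{ad}(u_0)-\lambda_0 D$, giving $\Delta(L_0)=\Delta(I_0)=0$. For each $n\neq 0$, the pair $(L_0,L_n)$ forces $a_k=b_k=0$ for $k\neq 0$ in the representing $u=\sum a_k L_k+b_k I_k$ (from the equation $\operatorname{ad}(u)(L_0)=0$), so evaluation at $L_n$ yields $\Delta(L_n)=\alpha_n L_n+\beta_n I_n$. Analogously $\Delta(I_n)=\gamma_n I_n$. The pair $(L_m,L_n)$ for distinct nonzero $m,n$ yields two independent systems of constraints on $u$, whose intersection (together with the freedom at the resonance indices $k=m,n$) forces $\alpha_m/m=\alpha_n/n$ and $\beta_m/m=\beta_n/n$. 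Thus $\alpha_n=An$ and $\beta_n=Bn$ for universal constants $A,B$, and subtracting $\operatorname{ad}(-A L_0-B I_0)$ achieves $\Delta(L_n)=0$ for every $n$, with $\Delta(I_n)=\gamma_n I_n$ for a (relabelled) sequence of constants $\gamma_n$ satisfying $\gamma_0=0$.

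To eliminate the $\gamma_n$, fix $n\in\mathbb{Z}^{*}$ and set $x=L_1+I_0+I_n$. The pair $(x,L_1)$ restricts $u$ to $\mathbb{C} L_1\oplus\mathbb{C} I_1$ and gives
\[
\Delta(x)=\lambda I_0+a_1 I_1+\lambda I_n+a_1(1-n)I_{n+1}
\]
for some $\lambda,a_1$. Since $\Delta(I_0)=0$ and $D(I_0)=I_0$, the pair $(x,I_0)$ forces the representing derivation's $D$-component to vanish, and matching $I_0$-coefficients of $\Delta(x)$ gives $\lambda=0$. The pair $(x,I_n)$ then produces a representing derivation with $D$-coefficient equal to $\gamma_n$, whose $I_0$-coefficient at $x$ (arising from $\gamma_n D(I_0)=\gamma_n I_0$) must equal $0$, giving $\gamma_n=0$. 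Thus $\Delta(L_n)=\Delta(I_n)=0$ for every $n\in\mathbb{Z}$.

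For arbitrary $x=\sum_m(c_m L_m+d_m I_m)$ of finite support, the 2-locality at $(x,L_{n_0})$ with $n_0$ outside $\operatorname{supp}(x)$ gives a representing derivation of the form $\operatorname{ad}(a L_{n_0}+b I_{n_0})+\lambda D$; comparing two such representations with $|n_0-n_0'|$ exceeding the diameter of $\operatorname{supp}(x)$ forces $a=b=0$ (the shifted $L$- and $I$-parts have disjoint supports), leaving $\Delta(x)=\lambda D(x)$. The pair $(x,I_0)$, using $\Delta(I_0)=0$, gives the alternative expression $\Delta(x)=[u,x]$ for some $u=a_0 L_0+\sum_k b_k I_k$. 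A direct computation (using 2-locality at $(L_p+I_{2p},I_{2p})$ and $(L_p+I_{2p},L_{n_0})$) shows $\Delta(L_p+I_{2p})=0$ for every $p\neq 0$; then the pair $(x,L_p+I_{2p})$ with $p$ sufficiently large restricts the representing derivation, and matching coefficients at an index $r=p+m$ outside $\operatorname{supp}_d(x)\cup\{2p\}$ yields a condition $b\cdot c_m(2p-r)=0$ that forces $b=0$ and hence $\lambda=0$. A handful of special cases ($x$ having no $L$-part, only $c_0\neq 0$, or $x=c_0 L_0+d_0 I_0$) are treated separately via the pairs $(x,L_0)$ and $(x,I_0)$ and the scaling identity~(\ref{recall1}).

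The main obstacle is the last step: the 2-locality constraints on a general $x$ must be combined across multiple pairs, and the key leverage is the library of vanishing test elements $\{L_n,I_n,L_p+I_{2p}\}$ established beforehand. The delicate point is in ensuring, for each $x$, the existence of an index $r$ (outside $\operatorname{supp}(d)$) where the representing derivation's $I_r$-coefficient produces the desired vanishing---this requires careful choice of the auxiliary parameter $p$ relative to $\operatorname{supp}(x)$.
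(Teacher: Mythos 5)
Your normalization steps are fine up to the point where $\Delta(L_n)=0$ for all $n$ and $\Delta(I_n)=\gamma_n I_n$ with $\gamma_0=0$ (this is a legitimate variant of the paper's Lemmas~\ref{mainlem}--\ref{lem16}). The gap is in the elimination of $\gamma_n$ via $x=L_1+I_0+I_n$. At the pair $(x,I_0)$ the representing derivation is only forced to have the form ${\rm ad}\,(a_0L_0+\sum_k b_kI_k)$ with no $D$-part (Lemma~\ref{mainlem}(ii)), and its abelian part acts nontrivially on the $L_1$-component of $x$: for instance $[b_{-1}I_{-1},L_1]=-2b_{-1}I_0$. So matching $I_0$-coefficients gives $\lambda=-2b_{-1}$, not $\lambda=0$. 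Likewise at the pair $(x,I_n)$, after the $L$-part of $\Delta(x)$ forces $a''_0=a''_n=0$ (so the $D$-coefficient is indeed $\gamma_n$), the $I_0$-coefficient of the representing derivation applied to $x$ is $\gamma_n-2b''_{-1}$, so you only obtain $\gamma_n=2b''_{-1}$. In fact the three pairs $(x,L_1)$, $(x,I_0)$, $(x,I_n)$ are simultaneously consistent with an arbitrary value of $\gamma_n$, because the image of ${\rm ad}\,(\sum_k b_kI_k)$ on $L_1$ sweeps out all of ${\rm span}\{I_j: j\neq 2\}$ and absorbs any discrepancy; so this step genuinely fails rather than merely lacking detail. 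The failure propagates: your later claim $\Delta(L_p+I_{2p})=0$ is justified through the pair $(L_p+I_{2p},I_{2p})$, which presupposes $\Delta(I_{2p})=0$, i.e.\ exactly the unproven $\gamma_{2p}=0$; and the final elimination of the $D$-coefficient for general $x$ rests on that vanishing.

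The repair is the paper's Lemma~\ref{tangtang666}: first use the Lemma~\ref{lem16}-type argument (your ``$\Delta(x)=\lambda D(x)$'' step) to write $\Delta(L_p+I_{2p})=\mu I_{2p}$, then use the pair $(I_0,\,L_p+I_{2p})$, whose representing derivation is ${\rm ad}\,(a_0L_0+\sum_k b_kI_k)$ with no $D$-part; evaluating on $L_p+I_{2p}$, the $L_p$-coefficient forces $a_0=0$, and the $I_{2p}$-coefficient of $[\sum_k b_kI_k,\,L_p]$ vanishes identically (it equals $(p-p)b_p$), so $\mu=0$. With $\Delta(L_p+I_{2p})=0$ in hand, the representing derivation at any pair $(L_p+I_{2p},y)$ has the restricted form ${\rm ad}\,(\xi L_p+\eta I_p+\xi I_{2p})$, which both kills your $\gamma_n$ (via the pair with $y=I_n$, since $[\,\cdot\,,I_n]$ cannot produce a nonzero multiple of $I_n$ for $p\neq 0$) and carries out the general-$x$ case as in Lemma~\ref{lem-mel6}. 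With that substitution your outline matches the paper's proof; as written, however, the middle step does not close.
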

For the proof of this Theorem we need several Lemmas.
For a 2-local derivation $\Delta: W(2,2)\rightarrow W(2,2)$ and
$x,y\in \mathcal{L},$ below we always use the symbol $\Delta_{x,y}$ for the derivation of $W(2,2)$ satisfying
$\Delta(x)=\Delta_{x,y}(x)$ and $\Delta(x)=\Delta_{x,y}(y)$; and $D$ for the out derivation of $W(2,2)$ given by Lemma \ref{lemma_1}.

\begin{lemma}\label{mainlem}
Let $\Delta$ be a 2-local derivation on $W(2,2)$. Take any but fixed $y\in W(2,2)$.
\begin{enumerate}[(i)]
\item  For a given $i\in \mathbb{Z}$, if  $\Delta (L_i)=0$  then
\begin{equation}\label{txm1}
\Delta_{L_i,y}={\rm ad} \left(a_i(L_i,y)L_i+b_i(L_i,y)I_i\right)+\lambda(L_i,y)D;
\end{equation}

\item If $\Delta (I_0)=0$  then for any $y\in W(2,2)$  we have
\begin{equation}\label{txm2}
\Delta_{I_0,y}={\rm ad} (a_0(I_0,y)L_0+\sum_{k\in \mathbb{Z}} b_k(I_0,y)I_k)
\end{equation}
where $\lambda, a_k, b_k (k \in \mathbb{Z})$ are complex-valued functions on $W(2,2)\times W(2,2)$.
\end{enumerate}
\end{lemma}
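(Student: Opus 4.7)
My approach is to substitute $x = L_i$ (respectively $x = I_0$) into the general form for $\Delta_{x,y}$ supplied by Lemma \ref{lemma_2}, then use the hypothesis $\Delta(L_i) = 0$ (respectively $\Delta(I_0) = 0$) together with the bracket formulas of $W(2,2)$ to pin down which of the scalar functions $a_k(\cdot,y)$, $b_k(\cdot,y)$, $\lambda(\cdot,y)$ are forced to vanish. The linear independence of the basis $\{L_m, I_m : m \in \mathbb{Z}\}$ is what converts the resulting single vector equation into an infinite family of scalar equations.

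For part (i), I would compute $\Delta_{L_i, y}(L_i)$ using $[L_k, L_i] = (k-i)L_{k+i}$, $[I_k, L_i] = -(i-k)I_{k+i} = (k-i)I_{k+i}$, and $D(L_i) = 0$, which gives
\begin{equation*}
0 = \Delta(L_i) = \Delta_{L_i,y}(L_i) = \sum_{k \in \mathbb{Z}}(k-i)\, a_k(L_i, y)\, L_{k+i} + \sum_{k \in \mathbb{Z}}(k-i)\, b_k(L_i, y)\, I_{k+i}.
\end{equation*}
Comparing coefficients in the basis forces $(k-i)a_k(L_i,y) = 0$ and $(k-i)b_k(L_i,y) = 0$ for every $k \in \mathbb{Z}$, so $a_k(L_i,y) = b_k(L_i,y) = 0$ for all $k \neq i$. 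The term $\lambda(L_i,y)D$ is unconstrained since $D(L_i) = 0$. Only the $k = i$ summands survive in (\ref{tangtang6}), yielding (\ref{txm1}).

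For part (ii), I would compute $\Delta_{I_0, y}(I_0)$. Using $[L_k, I_0] = k\, I_k$, $[I_k, I_0] = 0$, and $D(I_0) = I_0$, we obtain
\begin{equation*}
0 = \Delta(I_0) = \Delta_{I_0,y}(I_0) = \sum_{k \in \mathbb{Z}} k\, a_k(I_0, y)\, I_k + \lambda(I_0, y)\, I_0.
\end{equation*}
Matching the coefficient of $I_0$ forces $\lambda(I_0, y) = 0$, while matching the coefficient of $I_k$ for $k \neq 0$ forces $a_k(I_0, y) = 0$. The coefficients $b_k(I_0,y)$ do not appear in this equation because $[I_k, I_0] = 0$, so they remain unconstrained; the surviving terms give exactly (\ref{txm2}).

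Both computations are essentially mechanical, so there is no serious obstacle. The only points requiring care are the sign in $[I_k, L_i]$, which ensures that $L$-part and $I$-part coefficients cancel for the same indices in part (i), and the contrast that $D$ vanishes on $L_i$ but not on $I_0$, which is precisely why $\lambda$ must vanish in (ii) but is free in (i), and dually why the $b_k$ are free in (ii) but forced to vanish in (i).
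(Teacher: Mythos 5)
Your proof is correct and follows essentially the same route as the paper: substitute into the general form from Lemma \ref{lemma_2}, evaluate on $L_i$ (resp.\ $I_0$), and compare basis coefficients. In fact your computation in part (ii) is the more careful one, since $[L_k,I_0]=kI_k$ lands in the span of the $I_k$ (the paper's displayed line has a typo writing $L_k$ there), though the conclusion $\lambda(I_0,y)=0$ and $a_k(I_0,y)=0$ for $k\neq 0$ is unaffected.
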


\begin{proof}
 By Lemma \ref{lemma_2}, we can assume that
\begin{eqnarray}
\Delta_{L_i,y}={\rm ad} (\sum_{k\in \mathbb{Z}}(a_k(L_i,y)L_k+b_k(L_i,y)I_k))+\lambda(L_i,y)D, \label{zhong2tang}\\
\Delta_{I_0,y}={\rm ad} (\sum_{k\in \mathbb{Z}}(a_k(I_0,y)L_k+b_k(I_0,y)I_k))+\lambda(I_0,y)D \label{zhong3tang}
\end{eqnarray}
for some  complex-valued functions $\lambda, a_k, b_k (k \in \mathbb{Z})$  on $W(2,2)\times W(2,2)$.

(i) When $\Delta(L_{i})=0$, in view of (\ref{zhong2tang}) we obtain
\begin{eqnarray*}
\Delta(L_{i})&=&\Delta_{L_i, y}(L_{i})\\
             &=&[\sum_{k\in \mathbb{Z}}(a_k(L_i,y)L_k+b_k(L_i,y)I_k), L_i]+\lambda(L_i,y)D(L_i)\\
             &=& \sum_{k\in \mathbb{Z}}((k-i)a_k(L_i,y)L_{k+i}+(k-i)b_{k}(L_i,y)I_{k+i})=0.
\end{eqnarray*}
From the above equation, one has
$(k-i)a_k(L_i,y)=(k-i)b_{k}(L_i,y)=0$ for all $k\in \mathbb{Z}$, which deduces $a_k(L_i,y)=b_k(L_i,y)=0$ for all $i\in \mathbb{Z}$ with $k\neq i$. Then
Equation (\ref{zhong2tang}) becomes (\ref{txm1}), as deserved.

(ii) When $\Delta(I_{0})=0$, then it follows from (\ref{zhong3tang}) that
\begin{eqnarray*}
\Delta(I_{0})&=&\Delta_{I_0, y}(I_{0})\\
             &=&[\sum_{k\in \mathbb{Z}}(a_k(I_0, y)L_k+b_k(I_0, y)I_k), I_0]+\lambda(I_0, y)D(I_0)\\
             &=& \sum_{k\in \mathbb{Z}}ka_k(I_0, y)L_{k}+\lambda(I_0, y)I_0=0.
\end{eqnarray*}
Then we have $\lambda(I_0, y)=0$ and $ka_k(I_0, y)=0$ for all $k\in \mathbb{Z}$, i.e.,  $a_k(I_0, y)=0$ for all $k\in \mathbb{Z}^*$. This with (\ref{zhong3tang}) implies that (\ref{txm2})  holds. The proof is completed.
\end{proof}

\begin{lemma}\label{lem13}
Let $\Delta$ be a 2-local derivation on $W(2,2)$ such that $\Delta(L_{0})=\Delta(L_{1})=0.$
Then
\begin{eqnarray}\label{zhong1}
\Delta(L_{i})=0, \ \ \forall i\in \mathbb{Z}.
\end{eqnarray}
\end{lemma}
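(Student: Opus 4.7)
The plan is to exploit the very restrictive form that Lemma \ref{mainlem}(i) forces upon $\Delta_{L_i,y}$ whenever $\Delta(L_i)=0$: the adjoint part is supported only at index $i$. Since we are given vanishing at the two indices $i=0$ and $i=1$, each $\Delta(L_j)$ with $j\notin\{0,1\}$ will have to lie simultaneously in two $2$-dimensional subspaces whose intersection is trivial, forcing it to be zero.

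Concretely, fix an arbitrary $j\in\mathbb{Z}$ with $j\notin\{0,1\}$ and play the pair $(L_0,L_j)$ against the pair $(L_1,L_j)$. By Lemma \ref{mainlem}(i) applied to the hypothesis $\Delta(L_0)=0$,
$$
\Delta_{L_0,L_j}={\rm ad}\bigl(aL_0+bI_0\bigr)+\lambda D
$$
for some scalars $a,b,\lambda\in\mathbb{C}$, so using $[L_0,L_j]=-jL_j$, $[I_0,L_j]=-jI_j$, and $D(L_j)=0$ we obtain
$$
\Delta(L_j)=\Delta_{L_0,L_j}(L_j)=-ja\,L_j-jb\,I_j.
$$
The same lemma applied to $\Delta(L_1)=0$ yields
$$
\Delta_{L_1,L_j}={\rm ad}\bigl(cL_1+dI_1\bigr)+\mu D,
$$
and since $[L_1,L_j]=(1-j)L_{j+1}$, $[I_1,L_j]=(1-j)I_{j+1}$, $D(L_j)=0$, we also have
$$
\Delta(L_j)=\Delta_{L_1,L_j}(L_j)=c(1-j)L_{j+1}+d(1-j)I_{j+1}.
$$
Equating the two expressions and using the linear independence of $\{L_j,I_j,L_{j+1},I_{j+1}\}$ (which holds since $j\neq j+1$) gives $\Delta(L_j)=0$. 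The cases $j=0,1$ are covered by hypothesis, completing the argument.

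There is no serious obstacle here; the entire content of the lemma is the combination of Lemma \ref{mainlem}(i) with a disjoint-support argument. The only small point to be careful about is that the pair $(L_0,L_j)$ constrains $\Delta(L_j)$ to $\mathrm{span}\{L_j,I_j\}$ while $(L_1,L_j)$ constrains it to $\mathrm{span}\{L_{j+1},I_{j+1}\}$, and these spans are disjoint for every $j$, so no case analysis beyond excluding $j\in\{0,1\}$ (which are already in the hypothesis) is required.
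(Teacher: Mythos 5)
Your proof is correct and follows essentially the same route as the paper: apply Lemma \ref{mainlem}(i) at $i=0$ and $i=1$, evaluate both resulting derivations on $L_j$, and conclude from the linear independence of $\{L_j,I_j,L_{j+1},I_{j+1}\}$ that $\Delta(L_j)=0$. The only cosmetic difference is that you separate out the cases $j\in\{0,1\}$, which the paper handles implicitly since the two displayed expressions for $\Delta(L_i)$ vanish automatically at those indices.
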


\begin{proof} In view of $\Delta(L_{0})=\Delta(L_{1})=0$, by using Lemma \ref{mainlem} we can assume that
\begin{eqnarray}
\Delta_{L_0,y}={\rm ad} (a_0(L_0,y)L_0+b_0(L_0,y)I_0)+\lambda(L_0,y)D, \label{zhong21}\\
\Delta_{L_1,y}={\rm ad} (a_1(L_1,y)L_0+b_1(L_1,y)I_1)+\lambda(L_1,y)D  \label{zhong22}
\end{eqnarray}
for all $y\in W(2,2)$, where $\lambda, a_k, b_k (k \in \mathbb{Z})$ are complex-valued functions  on $W(2,2)\times W(2,2)$.
Let $i\in\mathbb{Z}$ be a fixed index. Then by taking $y=L_i$ in  (\ref{zhong21}) and (\ref{zhong22}) respectively we get
\begin{eqnarray*}
\Delta(L_i)&=&\Delta_{L_0,L_i}(L_i)=[a_0(L_0,L_i)L_0+b_0(L_0,L_i)I_0, L_i]+\lambda(L_0,L_i)D(L_i),\\
   &=& -ia_0(L_0,L_i)L_i-ib_0(L_0,L_i)I_i
\end{eqnarray*}
and
\begin{eqnarray*}
\Delta(L_i)&=&\Delta_{L_1,L_i}(L_i)=[a_1(L_1,L_i)L_1+b_1(L_1,L_i)I_1, L_i]+\lambda(L_1,L_i)D(L_i),\\
   &=& (1-i)a_0(L_0,L_i)L_{i+1}+(1-i)b_0(L_0,L_i)I_{i+1}.
\end{eqnarray*}
By the above two equations, it follows that $$ia_0(L_0,L_i)L_i+ib_0(L_0,L_i)I_i+(1-i)a_0(L_0,L_i)L_{i+1}+(1-i)b_0(L_0,L_i)I_{i+1}=0,$$
which implies $a_0(L_0,L_i)=b_0(L_0,L_i)=0$.  It concludes that $\Delta(L_i)=0$.
We finish the proof.
\end{proof}

\begin{lemma}\label{lem16}
Let $\Delta$ be a 2-local derivation on $W(2,2)$ such that $\Delta(L_{i})=0$ for all $i\in \mathbb{Z}$. Then
for any $x=\sum_{t\in \mathbb{Z}} (\alpha_tL_t+\beta_t I_t)\in W(2,2)$, we have
\begin{equation}\label{guol}
\Delta(x)=\Delta(\sum_{k\in \mathbb{Z}} (\alpha_tL_t+\beta_t I_t))= \mu_x \sum_{t\in \mathbb{Z}} \beta_t I_t
\end{equation}
where $\mu_x$ is a complex number depending on $x$.
\end{lemma}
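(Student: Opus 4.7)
The approach is to exploit Lemma~\ref{mainlem}(i): because $\Delta(L_i)=0$ for every $i\in\mathbb{Z}$, each comparison derivation $\Delta_{L_i,x}$ has the restricted form
\[
\Delta_{L_i,x}=\mathrm{ad}(c_iL_i+d_iI_i)+\mu_iD,
\]
where the scalars $c_i=a_i(L_i,x)$, $d_i=b_i(L_i,x)$, $\mu_i=\lambda(L_i,x)$ depend on $i$ (and on $x$). The identity $\Delta(x)=\Delta_{L_i,x}(x)$ then produces a family of equations, one for each $i\in\mathbb{Z}$, which I will use to pin down $\Delta(x)$.

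First I would expand $\Delta_{L_i,x}(x)$ using the brackets of $W(2,2)$ together with $D(x)=\sum_t\beta_tI_t$, obtaining
\[
\Delta(x)=c_i\sum_t\alpha_t(i-t)L_{i+t}+\sum_t\bigl(c_i\beta_t+d_i\alpha_t\bigr)(i-t)I_{i+t}+\mu_i\sum_t\beta_tI_t
\]
valid for every $i\in\mathbb{Z}$. Writing $\Delta(x)=\sum_k\gamma_kL_k+\sum_k\delta_kI_k$ (finite sum) and comparing coefficients gives
\[
\gamma_k=c_i(2i-k)\alpha_{k-i},\qquad \delta_k=(c_i\beta_{k-i}+d_i\alpha_{k-i})(2i-k)+\mu_i\beta_k,
\]
again for every $i\in\mathbb{Z}$.

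Next, since $x$ has finite support and $\Delta(x)$ is finitely supported as well, for $|i|$ sufficiently large every index $i+t$ with $\alpha_t\neq 0$ or $\beta_t\neq 0$ lies outside the support of $\Delta(x)$. Thus the first two sums in the displayed expansion must vanish term by term: $c_i\alpha_t(i-t)=0$ and $(c_i\beta_t+d_i\alpha_t)(i-t)=0$ for every $t$ in the support of $x$. If $x$ has some nonzero $L$-coefficient $\alpha_{t_0}$, then $(i-t_0)\neq 0$ for large $|i|$, forcing $c_i=0$ and then $d_i=0$; if instead $x$ lies entirely in the span of the $I_t$'s, then picking any $t_0$ with $\beta_{t_0}\neq 0$ still forces $c_i=0$, while $d_i$ does not enter. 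Either way the identity collapses to $\Delta(x)=\mu_i\sum_t\beta_tI_t$ for all sufficiently large $|i|$. Hence either $\sum_t\beta_tI_t=0$, giving $\Delta(x)=0$ (take $\mu_x=0$), or some $\beta_{k_0}\neq 0$, in which case $\mu_i$ must equal the single constant $\mu_x:=\delta_{k_0}/\beta_{k_0}$ for all such $i$, and $\Delta(x)=\mu_x\sum_t\beta_tI_t$, which is exactly \eqref{guol}.

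The main obstacle is that the scalars $c_i,d_i,\mu_i$ are a priori unrelated across different $i$, so a naive passage to the limit is unavailable; the argument must instead leverage the \emph{finite support} of both $x$ and $\Delta(x)$, combined with the nonvanishing of $(i-t)$ for large $i$, to decouple the three families of scalars. A minor bookkeeping point is separating the degenerate cases ($x=0$, $x$ purely in $\bigoplus_t\mathbb{C}I_t$, $x$ with a nonzero $L$-part) so that a $t_0$ triggering the cancellation is actually available.
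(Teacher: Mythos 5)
Your proof is correct and follows essentially the same route as the paper: apply Lemma~\ref{mainlem}(i) to $\Delta_{L_i,x}$, expand $\Delta(x)=\Delta_{L_i,x}(x)$, and use the finite support of $x$ and $\Delta(x)$ together with large $|i|$ to kill the $\mathrm{ad}$-contribution, leaving $\Delta(x)=\mu_i\sum_t\beta_tI_t$ with $\mu_i$ necessarily constant when $\sum_t\beta_tI_t\neq 0$. Your case analysis merely spells out the support argument that the paper compresses into ``taking enough different $i$, large enough.''
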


\begin{proof}
For $x=\sum_{t\in \mathbb{Z}} (\alpha_tL_t+\beta_t I_t)\in W(2,2)$, since $\Delta (L_i)=0$ for any $i\in \mathbb{Z}$, from Lemma \ref{mainlem} we have
\begin{eqnarray*}
\Delta(x)&=&\Delta_{L_i,x}(x)\\
&=&[a_i(L_i,x)L_i+b_i(L_i,x)I_i, x]+\lambda(L_i,x)D(x)\\
&=&\sum_{t\in \mathbb{Z}}(i-t)(\alpha_t a_i(L_i,x)L_{i+t}+(\beta_t a_i(L_i,x)+\alpha_t b_i(L_i,x)) I_{i+t} )+ \lambda(L_i,x) \sum_{t\in \mathbb{Z}} \beta_t I_t.
\end{eqnarray*}
By taking enough diffident $i\in \mathbb{Z}$ in the above equation and, if necessary, let these $i$'s to be large enough, we obtain that
$\Delta(x)=\lambda(L_i,x) \sum_{t\in \mathbb{Z}} \beta_t I_t.$ Note that $\mu_x\doteq\lambda(L_i,x)$ is a constant since it is independent on $i$.
\end{proof}

\begin{lemma}\label{tangtang666}
Let $\Delta$ be a 2-local derivation on $W(2,2)$ such that $\Delta(I_{0})=0$ and $\Delta(L_{i})=0$ for all $i\in \mathbb{Z}$.
Then for any $p\in \mathbb{Z}^*$ and $y\in W(2,2)$, there are  $\xi_p^y,\eta_p^y\in \mathbb{C}$ such that
\begin{eqnarray}
\Delta_{L_{p}+I_{2p}, y}= {\rm ad} (\xi_p^yL_p+\eta_p^y I_p+ \xi_p^y I_{2p}).\label{xi2}
\end{eqnarray}
\end{lemma}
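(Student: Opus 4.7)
The plan is first to pin down $\Delta(L_p+I_{2p})$ exactly, and then to read off the shape of $\Delta_{x,y}$ from the generic expansion supplied by Lemma \ref{lemma_2}. Set $x:=L_p+I_{2p}$. Because $\Delta(L_i)=0$ for every $i\in\mathbb{Z}$, Lemma \ref{lem16} applies to $x$ (with $\alpha_p=1$, $\beta_{2p}=1$ and every other coefficient zero) and yields a scalar $\mu_x\in\mathbb{C}$ with $\Delta(x)=\mu_x I_{2p}$. To evaluate $\mu_x$, I would exploit the other hypothesis $\Delta(I_0)=0$ through the alternative identity $\Delta(x)=\Delta_{I_0,x}(x)$. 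By Lemma \ref{mainlem}(ii), the derivation $\Delta_{I_0,x}$ carries no outer component and has the form ${\rm ad}(a_0L_0+\sum_k b_k I_k)$. Expanding the bracket against $L_p+I_{2p}$ gives
$$\mu_x I_{2p}=-p\,a_0\,L_p-2p\,a_0\,I_{2p}+\sum_{k\in\mathbb{Z}}(k-p)b_k I_{k+p}.$$
Reading off the coefficient of $L_p$ forces $a_0=0$ (using $p\in\mathbb{Z}^*$), and then the coefficient of $I_{2p}$ collapses to $\mu_x=-2p\,a_0=0$. Hence $\Delta(L_p+I_{2p})=0$.

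With this in hand, I would apply Lemma \ref{lemma_2} to write
$$\Delta_{x,y}={\rm ad}\Big(\sum_{k\in\mathbb{Z}}(a_k L_k+b_k I_k)\Big)+\lambda D$$
with coefficients depending on $x,y$, and expand the equation $\Delta_{x,y}(L_p+I_{2p})=0$ using $[L_k,L_p]=(k-p)L_{k+p}$, $[L_k,I_{2p}]=(k-2p)I_{k+2p}$, $[I_k,L_p]=(k-p)I_{k+p}$ and $D(I_{2p})=I_{2p}$. Matching the $L$-coefficients forces $a_k=0$ for all $k\neq p$; matching the coefficient of $I_{2p}$ forces $\lambda=0$ (the would-be $b_p$ contribution vanishes since $[I_p,L_p]=0$); matching the coefficient of $I_{3p}$ gives $p(b_{2p}-a_p)=0$, hence $b_{2p}=a_p$; and the remaining $I_m$-coefficients force $b_k=0$ for $k\notin\{p,2p\}$. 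Setting $\xi_p^y:=a_p$ and $\eta_p^y:=b_p$ then recovers exactly \eqref{xi2}.

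The main obstacle is the first step: working only with $\Delta(L_i)=0$ would leave a residual $\mu_x D$ summand in $\Delta_{x,y}$ that the generic coefficient-matching cannot eliminate. It is the joint exploitation of $\Delta(L_i)=0$ and $\Delta(I_0)=0$, channeled through the two distinct 2-local presentations $\Delta_{L_i,x}$ and $\Delta_{I_0,x}$ of $\Delta(x)$, that kills the outer-derivation component and simultaneously forces the coincidence $\xi_p^y=a_p=b_{2p}$ between the $L_p$- and $I_{2p}$-coefficients in \eqref{xi2}.
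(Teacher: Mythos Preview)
Your proof is correct and follows essentially the same route as the paper: first use Lemma~\ref{lem16} to get $\Delta(L_p+I_{2p})=\mu_x I_{2p}$, then evaluate via $\Delta_{I_0,x}$ (available by Lemma~\ref{mainlem}(ii)) to force $a_0=0$ and hence $\mu_x=0$, and finally expand $\Delta_{L_p+I_{2p},y}(L_p+I_{2p})=0$ in the generic form of Lemma~\ref{lemma_2} and match coefficients. Your coefficient-by-coefficient analysis (in particular isolating $\lambda$ from the $I_{2p}$ component and $b_{2p}=a_p$ from the $I_{3p}$ component) is in fact slightly cleaner than the paper's, but the argument is identical in substance.
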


\begin{proof}
For $p\in \mathbb{Z}^*$, by $\Delta(L_{i})=0$ for all $i\in \mathbb{Z}$ and Lemma \ref{lem16} we have
\begin{equation}\label{tXXz}
\Delta(L_{p}+I_{2p})=\mu_{L_{p}+I_{2p}} I_{2p},
\end{equation}
where $\mu_{L_{p}+I_{2p}}\in \mathbb{C}$ is given by (\ref{guol}).  In view of $\Delta(I_{0})=0$ and
Lemma \ref{txm1} we know that
\begin{eqnarray*}
&&\Delta(L_{p}+I_{2p})\\
&=&\Delta_{I_0,L_{p}+I_{2p}}(L_{p}+I_{2p})\\
&=&[a_0(I_0,L_{p}+I_{2p})L_0+\sum_{k\in \mathbb{Z}} b_k(I_0,L_{p}+I_{2p})I_k, L_{p}+I_{2p}]\\
&=& -pa_0(I_0,L_{p}+I_{2p})(L_{p}+2I_{2p})+\sum_{k\in \mathbb{Z}}(k-p)b_k(I_0,L_{p}+I_{2p})I_{k+p}.
\end{eqnarray*}
This, together with (\ref{tXXz}), gives that $-pa_0(I_0,L_{p}+I_{2p})=0$ and $-2pa_0(I_0,L_{p}+I_{2p})=\mu_{L_{p}+I_{2p}}$, i.e., we get  $\mu_{L_{p}+I_{2p}}=0$.
It follows by (\ref{tXXz}) that
\begin{equation}
\Delta(L_{p}+I_{2p})=0. \label{xi1}
\end{equation}
Next, for every $y\in W(2,2)$, by Lemma \ref{lemma_2} we can assume that
\begin{equation}\label{tangtang66}
\Delta_{L_p+I_{2p},y}={\rm ad} (\sum_{k\in \mathbb{Z}}(a_k(L_p+I_{2p},y)L_k+b_k(L_p+I_{2p},y)I_k))+\lambda(L_p+I_{2p},y)D.
\end{equation}
From (\ref{xi1}) and (\ref{tangtang66}), one has
\begin{eqnarray*}
&&\Delta(L_{p}+I_{2p})\\
&=&\Delta_{L_p+I_{2p},y}(L_{p}+I_{2p})\\
&=&[\sum_{k\in \mathbb{Z}}(a_k(L_p+I_{2p},y)L_k+b_k(L_p+I_{2p},y)I_k), L_{p}+I_{2p}]+\lambda(L_p+I_{2p},y)I_{2p}\\
&=&\sum_{k\in \mathbb{Z}}a_k(L_p+I_{2p},y)((k-p)L_{k+p}+(k-2p)I_{k+2p})\\
&&\ \ \ \ +\sum_{k\in \mathbb{Z}}(k-p)b_k(L_p+I_{2p},y)I_{k+p}+\lambda(L_p+I_{2p},y)I_{2p}=0.
\end{eqnarray*}
From this, it is easy to see that $(k-p)a_k(L_p+I_{2p},y)L_{k+p}=0$ for all $k\in \mathbb{Z}$ and so that $a_k(L_p+I_{2p},y)=0$ for
all $k\neq p$. Using this conclusion we observe the coefficient of $I_{3p}$ in the above equation, then one has
$$(p-2p)a_p(L_p+I_{2p},y)+(2p-p)b_{2p}(L_p+I_{2p},y)=0,$$
which implies $a_p(L_p+I_{2p},y)=b_{2p}(L_p+I_{2p},y)$. Furthermore,  by observing the coefficient of $I_{k}, k\neq 3p$ in the above equation
we get $\lambda(L_p+I_{2p},y)=0$ and $(k-p)b_k(L_p+I_{2p},y)=0$ for all $k\neq p, 2p$, i.e., $b_k(L_p+I_{2p},y)=0$ for all $k\neq p, 2p$.
Finally, by denoting $\xi_p^y=a_p(L_p+I_{2p},y)$ and $\eta_p^y=b_p(L_p+I_{2p},y)$ we finish the proof.
\end{proof}

\begin{lemma}\label{lem-mel6}
Let $\Delta$ be a 2-local derivation on $W(2,2)$ such that $\Delta(L_{0})=\Delta(L_{1})=\Delta(I_{0})=0.$ Then $\Delta(x)= 0$ for all $x\in W(2,2)$.
\end{lemma}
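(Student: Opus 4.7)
The strategy is to combine the two rigidity statements already in hand. Writing an arbitrary $x \in W(2,2)$ as $x=\sum_{t\in\mathbb{Z}}(\alpha_t L_t+\beta_t I_t)$ (a finite sum), Lemma \ref{lem16} forces
$$\Delta(x)=\mu_x\sum_{t\in\mathbb{Z}}\beta_t I_t$$
for some scalar $\mu_x\in\mathbb{C}$. In particular $\Delta(x)$ has no $L$-component, and the whole task reduces to showing $\mu_x=0$ in the case when $\sum_t\beta_t I_t\neq 0$ (otherwise $\Delta(x)=0$ automatically).

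To extract an equation that pins down $\mu_x$, I would feed the pair $(L_p+I_{2p},\,x)$ into the hypothesis. For every $p\in\mathbb{Z}^*$, Lemma \ref{tangtang666} provides $\xi_p^x,\eta_p^x\in\mathbb{C}$ such that
$$\Delta(x)=\Delta_{L_p+I_{2p},\,x}(x)=\bigl[\xi_p^x L_p+\eta_p^x I_p+\xi_p^x I_{2p},\;x\bigr].$$
A direct expansion using the brackets in $W(2,2)$ gives
$$\Delta(x)=\xi_p^x\sum_t\alpha_t(p-t)L_{p+t}+\bigl(\xi_p^x\beta_t+\eta_p^x\alpha_t\bigr)\sum_t(p-t)I_{p+t}+\xi_p^x\sum_t\alpha_t(2p-t)I_{2p+t}.$$
Comparing the two expressions for $\Delta(x)$: since the right-hand side of Lemma \ref{lem16} has no $L$-part, I must have $\xi_p^x\,\alpha_t(p-t)=0$ for every $t$; choosing $p$ outside the (finite) support of $\alpha$ and $p\notin\{0\}$ makes $\alpha_t(p-t)\neq 0$ for some $t$ whenever $\alpha\neq 0$, forcing $\xi_p^x=0$ in that case.

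The last step is a short case split on whether $\alpha=0$ or not. If $\alpha\neq 0$, then after eliminating $\xi_p^x$ I am left with $\mu_x\sum_t\beta_tI_t=\eta_p^x\sum_t\alpha_t(p-t)I_{p+t}$; choosing $p$ large enough so that $(p+\mathrm{supp}\,\alpha)\cap\mathrm{supp}\,\beta=\emptyset$ separates the two sides into disjoint $I$-indices, which (together with $\sum_t\beta_tI_t\neq 0$) forces $\mu_x=0$. If instead $\alpha=0$, I compare $\mu_x\sum_t\beta_tI_t=\xi_p^x\sum_t\beta_t(p-t)I_{p+t}$ and again choose $p$ large enough that $p+\mathrm{supp}\,\beta$ is disjoint from $\mathrm{supp}\,\beta$; the same index-separation argument yields $\mu_x=0$. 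In either case $\Delta(x)=0$, completing the proof.

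The only real obstacle is bookkeeping: one must choose $p\in\mathbb{Z}^*$ avoiding finitely many prescribed integers (to ensure both that the $L$-part constraint forces $\xi_p^x=0$ and that the resulting $I$-indices on the two sides of the equation do not overlap). Since $\mathbb{Z}^*$ is infinite and only finitely many values of $p$ are excluded, such a $p$ always exists, and the argument goes through uniformly.
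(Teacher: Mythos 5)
Your proposal is correct and follows essentially the same route as the paper: reduce to $\Delta(x)=\mu_x\sum_t\beta_tI_t$ via Lemma \ref{lem16} (after Lemma \ref{lem13} gives $\Delta(L_i)=0$ for all $i$), then compare with $\Delta_{L_p+I_{2p},x}(x)$ from Lemma \ref{tangtang666} and kill $\mu_x$ by varying $p$. Your single well-chosen $p$ with the support-separation condition is a slightly cleaner bookkeeping of the paper's ``take two large values $p_1,p_2$'' argument, but the substance is identical.
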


\begin{proof}

Take any but fixed $x=\sum_{t\in \mathbb{Z}} (\alpha_tL_t+\beta_t I_t)\in W(2,2)$, where $(\alpha_t)_{t\in \mathbb{Z}}, (\beta_t)_{t\in \mathbb{Z}}$ are
both sequences which contain only finitely many nonzero entries.

Since $\Delta(L_{0})=\Delta(L_{1})=0$, it follows by Lemma \ref{lem13} that
\begin{eqnarray}\label{zhong166}
\Delta(L_{i})=0, \ \ \forall i\in \mathbb{Z}.
\end{eqnarray}
This, together with Lemma \ref{lem16}, gives
\begin{equation}\label{guol6}
\Delta(x)=\Delta(\sum_{k\in \mathbb{Z}} (\alpha_tL_t+\beta_t I_t))= \mu_x \sum_{t\in \mathbb{Z}} \beta_t I_t
\end{equation}
for some $\mu_x\in \mathbb{C}$. Now, for any $p\in \mathbb{Z}^*$, by (\ref{zhong166}) and
$\Delta(I_{0})=0$, we obtain by Lemma \ref{tangtang666} that
\begin{eqnarray}
\Delta_{L_{p}+I_{2p}, x}= {\rm ad} (\xi_p^xL_p+\eta_p^x I_p+ \xi_p^x I_{2p})\label{xi26}
\end{eqnarray}
for some  $\xi_p^x,\eta_p^x\in \mathbb{C}$. Therefore, from (\ref{xi26}) one has
\begin{eqnarray}
\Delta(x)&=&\Delta_{L_p+I_{2p},x}(x)\nonumber\\
&=&[\xi_p^xL_p+\eta_p^x I_p+ \xi_p^x I_{2p}, \sum_{t\in \mathbb{Z}} (\alpha_tL_t+\beta_t I_t)]\nonumber\\
&=&\sum_{t\in \mathbb{Z}}((p-t)\xi_p^x \alpha_tL_{p+t}+(p-t)\xi_p^x \beta_tI_{p+t}) \label{final}\\
&&\ \ \ \ +\sum_{t\in \mathbb{Z}}((p-t)\eta_p^x \alpha_t I_{p+t}+(2p-t)\xi_p^x \alpha_t I_{2p+t}.\nonumber
\end{eqnarray}
Next the proof is divided into three cases according to the situations of $(\alpha_t)_{t\in \mathbb{Z}}, (\beta_t)_{t\in \mathbb{Z}}$.

{\bf Case i. } $(\beta_t)_{t\in \mathbb{Z}}$ is a zero sequence, i.e., $x=\sum_{t\in \mathbb{Z}} \alpha_tL_t$. Then by (\ref{guol6}), it is easy to see that $\Delta(x)=0$.

{\bf Case ii. } $(\alpha_t)_{t\in \mathbb{Z}}$ is a zero sequence, i.e., $x=\sum_{t\in \mathbb{Z}} \beta_tI_t$. Then by (\ref{guol6}) and (\ref{final}) we have
$$
\Delta(x)= \mu_x \sum_{t\in \mathbb{Z}} \beta_t I_t=\sum_{t\in \mathbb{Z}}(p-t)\xi_p^x \beta_tI_{p+t}
$$
for all $p\in \mathbb{Z}$. By taking enough diffident $p$ in the above equation and, if necessary, let these $p$'s to be large enough, we obtain that
 $\Delta(x)=0$.

{\bf Case iii.} Both $(\alpha_t)_{t\in \mathbb{Z}}$ and $(\beta_t)_{t\in \mathbb{Z}}$ are not zero sequences. Hence there is a nonzero term  $\alpha_{t_0}L_{t_0}$
in $x=\sum_{t\in \mathbb{Z}} (\alpha_tL_t+\beta_t I_t)$ for some $t_0\in \mathbb{Z}$.  Take two integers $p=p_1$ and $p=p_2$ in (\ref{final}) such that
$p_i-t_0\neq 0, i=1,2$, then by $(p_i-t_0)\xi_{p_i}^x \alpha_{t_0}L_{p_i+t_0}=0$ in (\ref{final}) we have $\xi_{p_i}^x=0$.
Then by (\ref{guol6}) and (\ref{final}) we have
$$
\Delta(x)= \mu_x \sum_{t\in \mathbb{Z}} \beta_t I_t=\sum_{t\in \mathbb{Z}}(p_i-t)\eta_{p_i}^x \alpha_t I_{p_i+t}, \ \  i=1,2.
$$
By taking $p_1$ and $p_2$ in the above equation such that $p_1, p_2, p_1-p_2$ are large enough, we see that $\Delta(x)=0$. The proof is completed.
\end{proof}

Now we are in position to prove Theorem \ref{thm-tang}.

\textbf{Proof of Theorem \ref{thm-tang} :}
Let $\Delta$ be a 2-local derivation on $W(2,2)$.
Take a derivation $\Delta_{L_0,L_1}$ such that
\begin{equation*}
\Delta(L_0)=\Delta_{L_0,L_1}(L_0)\ \ \text{and} \ \ \Delta(L_1)=\Delta_{L_0,L_1}(L_1).
\end{equation*}
Set $\Delta_1=\Delta-\Delta_{L_0,L_1}.$ Then $\Delta_1$ is a 2-local
derivation such that $\Delta_1(L_0)=\Delta_1(L_1)=0.$ By lemma \ref{lem13}, $\Delta_1(L_i)=0$ for all $i\in\mathbb{Z}.$
From this with Lemma \ref{lem16}, we have $\Delta_1(I_0)=\mu_{I_0}I_0$ for some $\mu_{I_0}\in \mathbb{C}$.
Now we set $\Delta_2=\Delta_1-\mu_{I_0}D$. Then $\Delta_2$ is a 2-local derivation such that
\begin{eqnarray*}
\Delta_2(L_{0})=\Delta_1(L_{0})-\mu_{I_0}D(L_{0})=0-0=0,\\
 \Delta_2(L_{1})=\Delta_1(L_{1})-\mu_{I_0}D(L_{1})=0-0=0,\\
  \Delta_2(I_{0})=\Delta_1(I_{0})-\mu_{I_0}D(I_{0})=\mu_{I_0}I_{0}-\mu_{I_0}I_{0}=0.
\end{eqnarray*}
By lemma \ref{lem-mel6}, it follows that $\Delta_2=\Delta-\Delta_{L_0,L_1}-\mu_{I_0}D\equiv0.$ Thus $\Delta=\Delta_{L_0,L_1}+\mu_{I_0}D$ is a derivation. The proof is completed.
\hfill$\Box$

\section{2-local derivation on the thin Lie algebra}

Let us consider the following (see \cite{Kha}) so-called {\it thin Lie algebra} $\mathfrak{T}$
with a basis \(\{e_n: n\in \mathbb{N}\}\), which is defined by the
following table of multiplications of the basis elements:
 $$[e_1,e_n]=e_{n+1},\ \ \ n\geq 2,$$
 and other products of the basis elements being zero. In this section,  we study the 2-local derivation on  the thin Lie algebra and prove that it admits a lots of
2-local derivations which are not derivations. Recall that the authors in \cite{ayu2019} give a special example of 2-local derivations on  $\mathfrak{T}$. The following lemma is given by \cite{ayu2019} with a slight difference.
\begin{lemma}\label{thm12}
Any derivation $\delta$ on the algebra thin Lie algebra $\mathfrak{T}$ if of the form $\delta\doteq \delta_{\alpha,\beta}^{(n,m)}$ which satisfies
\begin{eqnarray}
&& \delta_{\alpha,\beta}^{(n,m)}(e_1)=\sum\limits_{i=1}^n\alpha_ie_i, \label{a231}\\
&& \delta_{\alpha,\beta}^{(n,m)}(e_j)=(j-2)\alpha_1e_j+\sum\limits_{i=2}^m\beta_{i}e_{i+j-2},\ \ \ j\geq2, \label{a232}
\end{eqnarray}
where  $n,m-1\in \mathbb{N}$ and $\alpha=(\alpha_1, \cdots, \alpha_n)\in \mathbb{C}^n, \beta=(\beta_2, \cdots, \beta_m)\in \mathbb{C}^{m-1}$.
\end{lemma}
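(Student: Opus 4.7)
The plan is to exploit that $\mathfrak{T}$ is generated as a Lie algebra by $e_1$ and $e_2$ (since $e_{n+1}=[e_1,e_n]$ for $n\geq 2$), so any derivation $\delta$ is completely determined by $\delta(e_1)$ and $\delta(e_2)$. Writing $\delta(e_1)=\sum_{i\geq 1}\alpha_i e_i$ and $\delta(e_j)=\sum_{i\geq 1}\gamma_i^{(j)}e_i$ for $j\geq 2$ (finite sums), the content of the lemma is to identify the admissible pairs and exhibit the closed form (\ref{a232}).

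First, I would rule out any $e_1$-component in $\delta(e_j)$ for $j\geq 2$. Applying $\delta$ to $[e_j,e_k]=0$ (valid for all $j,k\geq 2$) and noting that in the Leibniz expansion only the $e_1$-components of $\delta(e_j)$ and $\delta(e_k)$ produce nonzero brackets, one obtains $\gamma_1^{(j)}e_{k+1}=\gamma_1^{(k)}e_{j+1}$. Fixing $j=2$ and letting $k\geq 3$ vary forces $\gamma_1^{(j)}=0$ for all $j\geq 2$. In particular $\delta(e_2)=\sum_{i=2}^{m}\beta_i e_i$ with $\beta_i\doteq\gamma_i^{(2)}$, which is the base case $j=2$ of (\ref{a232}).

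Next, applying $\delta$ to $e_{n+1}=[e_1,e_n]$ for $n\geq 2$ yields the recursion
$$
\delta(e_{n+1})=[\delta(e_1),e_n]+[e_1,\delta(e_n)]=\alpha_1 e_{n+1}+\sum_{i\geq 2}\gamma_i^{(n)}e_{i+1},
$$
where the second equality uses that only the $\alpha_1 e_1$ piece of $\delta(e_1)$ contributes to $[\delta(e_1),e_n]$. Feeding in the inductive hypothesis $\delta(e_n)=(n-2)\alpha_1 e_n+\sum_{i=2}^{m}\beta_i e_{i+n-2}$, an index-shift computation produces $\delta(e_{n+1})=(n-1)\alpha_1 e_{n+1}+\sum_{i=2}^{m}\beta_i e_{i+n-1}$, completing the induction and establishing (\ref{a232}). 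For the converse direction, I would verify that any map of the form (\ref{a231})--(\ref{a232}) satisfies the Leibniz rule on basis pairs: the trivial brackets $[e_j,e_k]=0$ for $j,k\geq 2$ are preserved because $\delta_{\alpha,\beta}^{(n,m)}(e_j)$ has no $e_1$-component when $j\geq 2$, and the nontrivial brackets $[e_1,e_j]=e_{j+1}$ are checked by direct calculation from the closed-form formulas.

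The only structural input is the vanishing $\gamma_1^{(j)}=0$ for $j\geq 2$, obtained from the trivial brackets of $\mathfrak{T}$; the remainder is elementary induction tracking index shifts. The main obstacle is therefore mild and chiefly notational: one must ensure that the diagonal $e_j$-coefficient accumulates to precisely $(j-2)\alpha_1$ across $j-2$ iterations of the recursion starting from $\delta(e_2)$, and that the finite supports of $\delta(e_1)$ and $\delta(e_2)$ are correctly tracked by the indices $n$ and $m$ in the statement.
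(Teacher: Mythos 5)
Your proposal is correct and follows essentially the same route as the paper: write out $\delta(e_1)$ and $\delta(e_2)$, use the vanishing brackets among the $e_j$ with $j\ge 2$ to kill the $e_1$-component, and then induct on $j$ via $e_{j+1}=[e_1,e_j]$, finishing with the direct verification of the converse. The only cosmetic difference is that you eliminate the $e_1$-coefficient of every $\delta(e_j)$, $j\ge 2$, at once from $[e_j,e_k]=0$, whereas the paper only extracts $\beta_1=0$ from $[e_2,e_3]=0$ and lets the induction do the rest.
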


\begin{proof}  Let $\delta$ be a derivation on $\mathcal{L}.$  We set
$\delta(e_1)=\sum\limits_{i=1}^n\alpha_ie_i,\ \ \delta(e_2)=\sum\limits_{i=1}^m\beta_ie_i,$
where $\alpha_i, \beta_j\in \mathbb{C},$ $i=2, \cdots, n,\ j=1, \cdots,m $ and $n,m\in \mathbb{N}.$
Then we have $\delta(e_3)=\delta([e_1,e_2])=[\delta(e_1),e_2]+[e_1,\delta(e_2)]=\alpha_1e_3+\sum\limits_{i=1}^n\beta_{i}e_{i+1}$. From this, one has
$0=\delta([e_2,e_3])=[\delta(e_2),e_3]+[e_2, \delta(e_3)]=\beta_1e_4$, and so that $\beta_1=0$.
This means that (\ref{a232}) replacing $\delta_{\alpha,\beta}^{(n,m)}$ by $\delta$ holds for $j=2$.
We assume that (\ref{a232}) holds for $j(\ge 2)$. Further,
We have
\begin{eqnarray*}
\delta(e_{j+1})&=&\delta([e_1,e_j])=[\delta(e_1),e_j]+[e_1,\delta(e_j)]\\
&=&[\sum\limits_{i=1}^n\alpha_ie_i,e_j]+[e_1, (j-2)\alpha_1e_j+\sum\limits_{i=2}^m\beta_{i}e_{i+j-2}]\\
&=&(j-1)\alpha_1e_{j+1}+\sum\limits_{i=2}^m\beta_{i}e_{i+j-1},
\end{eqnarray*}
which proves that (\ref{a232}) holds For $j+1$. By induction on $j$ we know that (\ref{a232}) holds.
Conversely, it is easy to check that a linear map $\delta$ on $\mathfrak{T}$ satisfying (\ref{a231}) and (\ref{a232}) is a derivation.
Denote this derivation $\delta$ by $\delta_{\alpha, \beta}^{(n,m)}$. The proof is completed.
\end{proof}

Now we give a complete classification of the 2-local derivation on $\mathfrak{T}$ as follow.

\begin{theorem}\label{th-exp}
Every 2-local derivation $\Delta$ on the thin Lie algebra $\mathfrak{T}$ is of the form
$$\Delta=\delta_{\alpha,\beta}^{(s,t)}+\Omega_{\theta, \lambda}^{(q,m)}$$
for some $s,t-1,m-1\in \mathbb{N}$,  $\lambda\in \mathbb{C}$,  and $\alpha=(\alpha_1, \cdots, \alpha_s)\in \mathbb{C}^s, \beta=(\beta_2, \cdots, \beta_t)\in \mathbb{C}^{t-1}$,  $\theta=(\theta_2, \cdots, \theta_m)\in \mathbb{C}^{m-1}$ and $q\in \{t\in \mathbb{Z}: t>2 \}$, where $\delta_{\alpha,\beta}^{(s,t)}$ is given by Lemma \ref{thm12} and $\Omega_{\theta, \lambda}^{(q,m)}:\mathfrak{T}\rightarrow \mathfrak{T}$ is a map that satisfies for any $x=\sum_{i=1}^p k_ie_i\in \mathfrak{T}$,
\begin{equation}\label{ex3}
\Omega_{\theta, \lambda}^{(q,m)}(x) = \begin{cases}
\sum\limits_{i=2}^p\sum\limits_{j=2}^mk_i\theta_{j}e_{i+j-2}, & \text{if $k_1\neq 0$,}\\
\lambda k_q e_q, & \text{if $x=k_q e_q$ for some $q$ with $2<q\le p$,}\\
0, & \text{ others}
\end{cases}
\end{equation}
\end{theorem}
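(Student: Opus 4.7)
The plan is to mimic the normalization strategy used in the proof of Theorem \ref{thm-tang}. Given a 2-local derivation $\Delta$ on $\mathfrak{T}$, I would first choose $\delta_0 := \Delta_{e_1,e_2}$, which by Lemma \ref{thm12} has the form $\delta_{\alpha,\beta}^{(s,t)}$ for suitable parameters $s,t,\alpha,\beta$. Setting $\Delta' := \Delta - \delta_0$, I note that for every pair $(x,y)$ the map $\Delta_{x,y} - \delta_0$ is again a derivation matching $\Delta'$ at $(x,y)$, so $\Delta'$ remains a 2-local derivation; moreover $\Delta'(e_1) = \Delta'(e_2) = 0$ by construction. The task then reduces to showing that $\Delta' = \Omega_{\theta,\lambda}^{(q,m)}$ for suitable remaining parameters, which gives the required decomposition $\Delta = \delta_{\alpha,\beta}^{(s,t)} + \Omega_{\theta,\lambda}^{(q,m)}$.

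The next step is to analyse the action of $\Delta'$ on the basis elements $e_j$ for $j \geq 3$. The 2-local condition at the pair $(e_2, e_j)$ produces a matching derivation $\delta_{\alpha',\beta'}^{(n',m')}$ with $\delta(e_2) = \sum_{i \geq 2} \beta'_i e_i = 0$, forcing $\beta'_i = 0$ for all $i \geq 2$; hence $\delta(e_j) = (j-2)\alpha'_1 e_j$ and therefore $\Delta'(e_j) = c_j e_j$ for some scalar $c_j$. To constrain the $c_j$'s, I would first determine $\Delta'(e_j + e_{j'})$ for $j' > j \geq 3$ using the pairs $(e_j, e_j+e_{j'})$ and $(e_{j'}, e_j+e_{j'})$ (each pinning down one of the two coefficients), and then feed the resulting value into the pair $(e_1, e_j+e_{j'})$, whose matching derivation has $\alpha = 0$ and so gives a linear recursion on its $\beta_i$-coefficients. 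The finite-support requirement on the $\beta$'s coming from Lemma \ref{thm12} then collapses this recursion and pins down the parameters $q$ and $\lambda$ asserted in the theorem.

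For a general $x = \sum_{i=1}^p k_i e_i$, the analysis splits according to whether $k_1 \neq 0$. If $k_1 \neq 0$, the pair $(e_1, x)$ supplies a matching derivation with $\alpha = 0$, so $\Delta'(x) = \sum_{j \geq 2}\sum_{i \geq 2} k_j \beta_i e_{i+j-2}$; the delicate point is to show that the $\beta_i$'s appearing in this expression are the same fixed constants $\theta_i$ for every such $x$. This is extracted by comparing with pairs of the form $(e_1+e_2, x)$ or $(e_1, e_1+e_j)$ and exploiting the fact that a derivation is uniquely determined by its values on $e_1$ and $e_2$. For $k_1 = 0$, the single-term versus multi-term dichotomy governing the last two cases of $\Omega$ is treated by combining pairs $(e_2, x)$ with pairs $(e_j, x)$ for various $j \geq 3$. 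I expect the main obstacle to be the identification of $\lambda$ and $q$: extracting this structural rigidity from the a priori unrelated scalars $c_j$ relies crucially on the finite-support constraint for derivation parameters together with a careful choice of test pairs. Once that step is in place, extracting the $\theta_i$'s and verifying the ``others'' case should follow by analogous but more routine pair manipulations.
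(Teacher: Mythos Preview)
Your normalization step $\Delta' := \Delta - \Delta_{e_1,e_2}$ coincides exactly with the paper's first move. After that, however, your route diverges substantially from the paper's, and the place you flag as ``the delicate point'' is precisely where the two approaches part ways.

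The paper does \emph{not} first compute $\Delta'(e_j)$ and then propagate constraints via pairs such as $(e_j,e_j+e_{j'})$. Instead, for a completely arbitrary fixed $x=\sum_{i=1}^p k_ie_i$ it writes down the two expressions for $\widetilde{\Delta}(x)$ coming from the pairs $(e_1,x)$ and $(e_2,x)$ alone, and compares them directly. The pair $(e_1,x)$ forces the matching derivation to have $\alpha=0$, giving
\[
\widetilde{\Delta}(x)=\sum_{j=2}^{m}\beta_j^{x}\sum_{i=2}^{p}k_ie_{i+j-2},
\]
while the pair $(e_2,x)$ forces $\beta=0$, giving
\[
\widetilde{\Delta}(x)=k_1\sum_{i=1}^{n}\alpha_i^{x}e_i+\alpha_1^{x}\sum_{i=3}^{p}(i-2)k_ie_i.
\]
A short case split on whether $k_1\neq 0$, then (when $k_1=0$) on whether $k_2\neq 0$, and finally on whether more than one $k_i$ with $i\ge 3$ is nonzero, yields the three branches of $\Omega$ directly. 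No other test pairs are used.

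Crucially, the paper never shows that the $\beta_j^{x}$ (renamed $\theta_j$), the scalar $\lambda=\beta_2^{x}$, the integer $q$, or even the length $m$ are independent of $x$. In the proof they are manifestly $x$-dependent: the paper literally writes $\theta_j=\beta_j^{x}$ and $\lambda\doteq\beta_2^{x}$. So your plan to ``show that the $\beta_i$'s appearing in this expression are the same fixed constants $\theta_i$ for every such $x$'' is not something the paper does, and you should not attempt it: either the theorem is to be read with parameters allowed to vary with $x$ (in which case your extra work is unnecessary), or the paper's own argument is incomplete on this point (in which case your proposed test pairs $(e_1+e_2,x)$, $(e_1,e_1+e_j)$ would need a separate justification that the finite-support constraint actually forces uniqueness, and this is far from automatic). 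Either way, following the paper means stopping at the pointwise comparison of the $(e_1,x)$ and $(e_2,x)$ formulas.
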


\begin{proof}
Suppose that $\Delta$ is a 2-local derivation on the thin Lie algebra $\mathfrak{T}$. Let $\widetilde{\Delta}=\Delta-\Delta_{e_1,e_2}$.
Then $\widetilde{\Delta}$ is also a 2-local derivation on the thin Lie algebra $\mathfrak{T}$ satisfying $\widetilde{\Delta}(e_1)=\widetilde{\Delta}(e_2)$=0.

Take any but fixed $x=\sum\limits_{i=1}^pk_ie_i\in \mathfrak{T}$.  If $x=0$, then by (\ref{recall1}) we know $\widetilde{\Delta}(x)=0$. Hence below we always
assume that $x\neq 0$, i.e., $k_p\neq 0$ for some $p\in \mathbb{N}$.

For the derivation $\widetilde{\Delta}_{e_1, x}$, as $\widetilde{\Delta}_{e_1, x}(e_1)=\widetilde{\Delta}(e_1)=0$, it follows by Lemma \ref{thm12} that
\begin{eqnarray*}
&&\widetilde{\Delta}_{e_1, x}(e_1)=0, \\
&&\widetilde{\Delta}_{e_1, x}(e_j)=\sum\limits_{i=2}^m\beta_{i}^xe_{i+j-2} , \ \forall j\ge 2,
\end{eqnarray*}
for some $m\in \mathbb{N}$ with $m\ge 2$ and $\beta_{i}^x\in \mathbb{C}, i=2, \cdots, m$ with $\beta_{m}^x\neq 0$. Therefore we have
\begin{eqnarray}\label{ximage1}
\widetilde{\Delta}(x)&=&\widetilde{\Delta}_{e_1, x}(x)=k_1\nonumber\widetilde{\Delta}_{e_1, x}(e_1)+\cdots+k_p\nonumber\widetilde{\Delta}_{e_1, x}(e_p)\\
&=&k_2\beta_{2}^xe_2+(k_2\beta_{3}^x+k_3\beta_{2}^x)e_3 +\cdots \\
 &&\ \  +(k_{p-1}\beta_{m}^x+k_p\beta_{m-1}^{x})e_{p+m-3}+k_p\beta_{m}^xe_{p+m-2}\nonumber\\
 &=& \beta_{2}^x \sum\limits_{i=2}^pk_ie_i+\beta_{3}^x \sum\limits_{i=2}^pk_ie_{i+1}+\cdots+\beta_{m}^x \sum\limits_{i=2}^pk_ie_{i+m-2}\nonumber
\end{eqnarray}

For the derivation $\widetilde{\Delta}_{e_2, x}$, by $\widetilde{\Delta}_{e_2, x}(e_2)=\widetilde{\Delta}(e_2)=0$ and Lemma \ref{thm12}, we have
\begin{eqnarray*}
&&\widetilde{\Delta}_{e_2, x}(e_1)=\sum_{i=1}^n \alpha_i^x e_i, \\
&& \widetilde{\Delta}_{e_2, x}(e_j)=(j-2)\alpha_1^x e_j , \ \forall j\ge 2,
\end{eqnarray*}
for some $n\in \mathbb{N}$  and $\alpha_{i}^x\in \mathbb{C}, i=1, \cdots, n$ with $\alpha_{n}^x\neq 0$. From this, we get
\begin{eqnarray}\label{ximage2}
\widetilde{\Delta}(x)&=&\widetilde{\Delta}_{e_2, x}(x)\nonumber\\
&=&k_1(\alpha_{1}^xe_1+\cdots+\alpha_{n}^xe_n) \\
 &&\ \  +k_3\alpha_{1}^xe_3+2k_4\alpha_{1}^xe_4+\cdots+(p-2)k_p\alpha_{1}^xe_p.\nonumber
\end{eqnarray}
Next, according to the situations of coefficients $k_1, \cdots, k_p$ in $x=\sum\limits_{i=1}^pk_ie_i$, the proof is divided into the following cases.

{\bf Case 1.} When $k_1\neq 0$. By comparing (\ref{ximage1}) with (\ref{ximage2}), we have $k_1\alpha_{1}^xe_1=0$ and so that $\alpha_{1}^x=0$.
Therefore, (\ref{ximage2}) becomes
$$
\widetilde{\Delta}(x)=k_1(\alpha_{2}^xe_2+\cdots+\alpha_{n}^xe_n).
$$
This, together with (\ref{ximage1}), gives that $n=p+m-2$ and
\begin{equation}\label{tangzz}
\begin{cases}
k_1\alpha_2^x=&k_2\beta_2^x, \\
k_1\alpha_3^x=&k_2\beta_3^x+k_3\beta_2^x,\\
k_1\alpha_4^x=&k_2\beta_4^x+k_3\beta_3^x+k_4\beta_2^x,\\
\vdots &\vdots\\
k_1\alpha_n^x=&k_p\beta_m^x.
\end{cases}
\end{equation}
Note that $k_1\neq 0$,  if we given a sequence of numbers $\beta_2^x, \cdots, \beta_m^x$
then we can get a sequence of numbers $\alpha_2^x, \cdots, \alpha_n^x$ satisfying (\ref{tangzz}). Hence in this case
 we let $\Delta(x)$  be of the form (\ref{ximage1}), namely, by denoting $\theta_j=\beta_{j}^x, j=2, \cdots, m$ we have
$$
\widetilde{\Delta}(x)=\theta_{2} \sum\limits_{i=2}^pk_ie_i+\theta_{3}
\sum\limits_{i=2}^pk_ie_{i+1}+\cdots+\theta_{m} \sum\limits_{i=2}^pk_ie_{i+m-2}=\sum_{i=2}^p\sum_{j=2}^mk_i\theta_{j}e_{i+j-2}.
$$

{\bf Case 2.} When $k_1=0$. By (\ref{ximage2}) we have
\begin{equation}\label{zzz1}
\widetilde{\Delta}(x)=\alpha_{1}^x(k_3e_3+2k_4e_4+\cdots+(p-2)k_pe_p).
\end{equation}
From this we see that if $p=2$ or $\alpha_{1}^x =0$ then  $\widetilde{\Delta}(x)=0$. Assume that $p\ge 3$ and $\alpha_{1}^x \neq 0$. On the other hand, by (\ref{zzz1}) and (\ref{ximage1}) we see that  $\alpha_{1}^x(p-2)k_pe_p=k_p\beta_{m}^xe_{p+m-2}$ and so that $p=p+m-2$. In other words, $m=2$. Therefore, (\ref{ximage1}) becomes
\begin{equation}\label{zzz2}
\widetilde{\Delta}(x)=\beta_{2}^x (k_2e_2+\cdots+k_pe_p).
\end{equation}

{\bf Subcase 2.1} When $k_2\neq 0$. Then by (\ref{zzz1}) and (\ref{zzz2}) one has $\beta_{2}^x k_2e_2=0$ which deduces $\beta_{2}^x=0$.
Hence by (\ref{zzz2}) we have  $\widetilde{\Delta}(x)=0$.

{\bf Subcase 2.2} When $k_2= 0$. In view of (\ref{zzz1}) and (\ref{zzz2}), we get
\begin{equation}\label{complete1}
\widetilde{\Delta}(x)=\alpha_{1}^x(k_3e_3+2k_4e_4+\cdots+(p-2)k_pe_p)=\beta_{2}^x (k_3e_3+k_4e_4+\cdots+k_pe_p).
\end{equation}
If there are two coefficients $k_s, k_t$, $3\le s<t\le p$ in (\ref{complete1}) such that $k_s k_t\neq 0$,
then we have $\alpha_{1}^x(s-2)=\beta_{2}^x$ and $\alpha_{1}^x(t-2)=\beta_{2}^x$. This yields $\alpha_{1}^x=0$ and then $\widetilde{\Delta}(x)=0$.
If there exist only one $k_q\neq 0$ for some $3\le q \le p$, i.e., $x=k_q e_q$, then we have by (\ref{complete1}) that $\widetilde{\Delta}(x)=\lambda k_qe_q$ by denoting
$\lambda\doteq\beta_{2}^x$. If all $k_j's$ are equal to $0$, then $\widetilde{\Delta}(x)=0$.

Now, by summarizing the above processes we get $\widetilde{\Delta}=\Omega_{\theta, \lambda}^{(q,m)}$ for some appropriate $\theta, \lambda, q,m$. Note that $\widetilde{\Delta}=\Delta-\Delta_{e_1,e_2}$. Let the derivation $\Delta_{e_1,e_2}$ be of the form $\delta_{\alpha,\beta}^{(n,s)}$ for some appropriate $\alpha,\beta,n,s$ in
Lemma \ref{thm12}, then we complete the proof.
\end{proof}

By Theorem \ref{th-exp}, we know the thin Lie algebra admits a lots of
2-local derivations which are not derivations. We give two examples as follows.

\begin{example}
Let $\Delta=\delta_{\alpha,\beta}^{(s,t)}+\Omega_{\theta, \lambda}^{(q,m)}:\mathfrak{T}\rightarrow \mathfrak{T}$ with $m=2$ and
$\alpha=0$, $\beta=0$, $\theta=1$, $\lambda=0$, that is
\begin{equation*}
\Delta(\sum\limits_{i=1}^pk_ie_i) =
\begin{cases}
\sum\limits_{i=2}^pk_ie_i, & \text{if } k_1\neq 0,\\
0, & \text{if } k_1=0.
\end{cases}
\end{equation*}
The authors in \cite{ayu2019} have shown that such  $\Delta$ is a 2-local derivation on
\(\mathfrak{T}\) but it is not a derivation.
\end{example}

\begin{example}
Let $\delta_{\alpha,\beta}^{(s,t)}=\delta_{\alpha,\beta}^{(s,t)}+\Omega_{\theta, \lambda}^{(q,m)}:\mathfrak{T}\rightarrow \mathfrak{T}$
with $\delta_{\alpha,\beta}^{(s,t)}=0$, $m=q=3$ and
$\theta=(1,1)$, $\lambda=2$, that is $\Delta=\Omega_{(1,1), 2}^{(3,3)}$. Exactly we have
\begin{equation*}\label{yb4}
\Delta(\sum\limits_{i=1}^pk_ie_i) =
\begin{cases}
\sum\limits_{i=2}^pk_ie_i+\sum\limits_{i=2}^pk_ie_{i+1}, & \text{if } k_1\neq 0,\\
2k_3 e_3, & \text{if } \sum\limits_{i=1}^pk_ie_i=k_3 e_3,\\
0, & \text{if } k_1=0.
\end{cases}
\end{equation*}
Then by theorem \ref{th-exp} it is easy to see that $\Delta$ is a $2$-local derivation. We will see that $\Delta$ is not a derivation.
In fact, let $x=e_1+e_2$ and $y=-e_1-e_2+2e_3.$ Then we have $\Delta(x)=e_2+e_3$, $\Delta(y)=-e_2+e_3+2e_4$ and $\Delta(x+y)=\Delta(2e_3)=4e_3\neq \Delta(x)+\Delta(y)=2e_3+2e_4$.
So, \(\Delta\) is not
additive, and therefore is not a derivation.
\end{example}

\section*{Acknowledgments}
This work is supported in part by National Natural Science Foundation of China (Grant No. 11771069) and the fund of Heilongjiang Provincial Laboratory of the Theory and Computation of Complex Systems.

\end{document}